\documentclass[a4paper,12pt]{article}

\usepackage{amsfonts,amssymb,amsmath,graphicx}
\usepackage{theorem}
 {\theorembodyfont{\rm}
  \newtheorem{defi}{Definition}[section]
  \newtheorem{rem}[defi]{Remark}
  \newtheorem{exa}[defi]{Example}
  
 }

  \newtheorem{thm}[defi]{Theorem}
  


\newcommand{\Aa}{{\mathbb A}}

\newcommand{\CC}{{\mathbb C}}

\newcommand{\HH}{{\mathbb H}}

\newcommand{\PP}{{\mathbb P}}

\newcommand{\RR}{{\mathbb R}}

\newcommand{\cB}{{\mathcal B}}
\newcommand{\cC}{{\mathcal C}}
\newcommand{\cD}{{\mathcal D}}

\newcommand{\cR}{{\mathcal R}}

\newcommand{\fC}{{\mathfrak C}}

\newcommand{\fL}{{\mathfrak L}}

\newcommand{\End}{{\mathrm{End}}}

\newcommand{\T}{{\mathrm{T}}}

\newcommand{\id}{{\mathrm{id}}}
\newcommand{\dis}
                 {{\mathrel{\scriptstyle{\triangle}}}}

\newcommand{\GL}{{\mathrm{GL}}}

\newcommand{\E}{{\mathrm{E}}}

\newcommand{\dist}{{\mathrm{dist}}}
\newcommand{\diag}{{\mathrm{diag}}}
\newcommand{\Rd}{{\widehat{R}^2}}
\newcommand{\Pd}{{\widehat\PP}}
\let\phi=\varphi

\let\theta=\vartheta


\newcommand{\SDelimArray}[4]{\hbox{\scriptsize\arraycolsep=.5\arraycolsep
  $\left#1\!\!\begin{array}{*{#3}{c}}#4\end{array}\!\!\right#2$}}

\newcommand{\SMat}{\SDelimArray()}

\newenvironment{proof}
    {\begin{trivlist} \item {\sl Proof:}} 
    {\/ $\square$ \end{trivlist}}

\newcounter{abbildung} 
\begin{document}
\title{The Dual of a Chain Geometry}
\author{Andrea Blunck\thanks{Supported by a Lise Meitner
Research Fellowship of the Austrian Science Fund (FWF), project M574-MAT.} \and
Hans Havlicek}


\maketitle

\begin{abstract}
We introduce and discuss the dual of a chain geometry. Each chain geometry is
canonically isomorphic to its dual.  This allows us to show that there are
isomorphisms of chain geometries that arise from antiisomorphisms of the
underlying rings.

\emph{Mathematics Subject Classification (2000):} 51B05.

\emph{Keywords:} projective line, chain geometry, duality, ring,
antiisomorphism.
\end{abstract}

\section{Introduction}

For each left module over a ring $R$ there is the dual module. It may
be considered as a right $R$-module or as a left module over the
opposite ring $R^\circ$. A chain geometry $\Sigma(K,R)$ is based upon
a proper subfield $K$ of a ring $R$ and the left $R$-module $R^2$.
Observe that we do not assume that $K$ is in the centre of $R$. The
\emph{dual chain geometry} $\widehat\Sigma(K,R)$ of $\Sigma(K,R)$ is
defined via the dual module of $R^2$. Up to notation
$\widehat\Sigma(K,R)$ is the same as the chain geometry
$\Sigma(K^\circ,R^\circ)$. There is a ``canonical isomorphism'' from
each chain geometry onto its dual. However, in general it seems
difficult to describe it explicitly for all points in terms of
coordinates unless the underlying ring $R$ has some additional
properties.

We establish that each residue of a chain geometry can be identified
with a residue of its dual in a natural way. However, the
(algebraically defined) relation of compatibility given on the set of
blocks of each residue is not always preserved under the canonical
isomorphism, whence one obtains also a notion of \emph{dual
compatibility}.

From \cite[Theorem 4.1]{blu+h-99a}, the point set of each residue
together with one compatibility class of blocks forms a partial
affine space which is embeddable in the affine space on the left
vector space $R$ over $K$. This result remains true if
``compatibility'' and ``left vector space'' are replaced with ``dual
compatibility'' and ``right vector space'', respectively. In
addition, we give an example of a chain geometry with the following
property: The point set of a residue together with certain blocks of
different compatibility classes forms not only a partial affine space
but a non-desarguesian affine plane.

Finally, we show that two chain geometries $\Sigma(K,R)$ and
$\Sigma(K',R')$ are isomorphic if there is an antiisomorphism $R\to
R'$ that takes $K$ onto a subfield of $R'$ which is conjugate to
$K'$. Again, an explicit description in terms of coordinates of such
an isomorphism of chain geometries does not seem at hand for
arbitrary rings, but we are able to give a formula which allows to
calculate the images of all points in the connected component of the
point $R(1,0)$. This generalizes, in part, a result on isomorphisms
of chain geometries in \cite{bart-89}.

\section{Preliminaries}\label{se:prelim}

Throughout this paper we shall only consider associative rings with a
unit element $1$, which is preserved by homomorphisms, inherited by
subrings, and acts unitally on modules.  The group of invertible
elements of a ring $R$ will be denoted by $R^*$. We refer to
\cite[Chapter II]{bour-89} for the basic properties of free modules.

Consider the free left $R$-module $R^2$ and the group $\GL_2(R)$ of
invertible $2\times 2$-matrices with entries in $R$. A pair $(a,b)\in
R^2$ is called \emph{admissible}, if there exists a matrix in
$\GL_2(R)$ with $(a,b)$ being its first row. The \emph{projective
line over\/} $R$ is the orbit of the free cyclic submodule $R(1,0)$
under the action of $\GL_2(R)$. In other words, $\PP(R)$ is the set
of all $p\leq R^2$ such that $p=R(a,b)$ for an admissible pair
$(a,b)\in R^2$; compare \cite[p.~785]{herz-95}. From
\cite[Proposition~2.1]{blu+h-00a}, in certain cases $R(x,y)\in\PP(R)$
does not imply the admissibility of $(x,y)\in R^2$. However, we adopt
the convention that points are represented by admissible pairs only.
Two admissible pairs represent the same point exactly if they are
left-proportional by a unit in $R$.

Points $p=R(a,b)$ and $q= R(c,d)$ are called \emph{distant} if
$\SMat2{a&b\\c&d}\in \GL_2(R)$. The vertices of the \emph{distant
graph} on $\PP(R)$ are the points of $\PP(R)$, the edges of this
graph are the unordered pairs of distant points. The set $\PP(R)$ can
be decomposed into \emph{connected components} (maximal connected
subsets of the distant graph), for each connected component there is
a \emph{distance function} ($\dist(p,q)$ is the minimal number of
edges needed to go from vertex $p$ to vertex $q$). All connected
components share a common \emph{diameter} (the supremum of all
distances between its points). See \cite[Theorem~3.2]{blu+h-00c}.

Let $K\subset R$ be a (not necessarily commutative) proper subfield.
The projective line over $K$ can be embedded in $\PP(R)$ via
$K(k,l)\mapsto R(k,l)$. The image of $\PP(K)$ under this embedding is
a subset $\cC\subset\PP(R)$ called the \emph{standard chain}. The
orbit of $\cC$ under the action of $\GL_2(R)$ is denoted by
$\fC(K,R)$ and each of its elements is called a \emph{chain}.
Altogether the \emph{chain geometry} $\Sigma(K,R)$ is the incidence
structure with point set $\PP(R)$ and chain set $\fC(K,R)$
\cite{blu+h-99a}. Observe that a chain geometry according to this
definition has been called a \emph{generalized chain geometry} in
\cite{blu+h-00a} and \cite{blu+h-00c} in order to distinguish from an
``ordinary'' chain geometry where $K$ is in the centre of $R$.
However, in the present paper such a distinction will not be
essential.

\section{The Dual of a Chain Geometry}

Reversing the multiplication in the ring $R$ yields the
\emph{opposite ring} $R^\circ$ and the projective line
$\PP(R^\circ)$. Further, if $K$ is a proper subfield of $R$, then the
opposite field $K^\circ$ appears as a proper subfield of $R^\circ$
and we obtain the chain geometry $\Sigma(K^\circ,R^\circ)$. The left
$R^\circ$-module $(R^\circ)^2$ can be considered as a \emph{right}
$R$-module in a natural way. It will then be denoted by $\Rd$ and its
elements will be written as \emph{columns} rather than rows. The
right $R$-module $\Rd$ will be identified with the \emph{dual module}
of $R^2$ as usual, i.e., the image of $(a,b)\in R^2$ under
$(v,w)^\T\in\Rd$ is given by their matrix product. For a subset
$U\subset R^2$ we write
\begin{equation}\label{eq:orthogonal}
  U^\bot:=\left\{\SMat2{x\\y}\in\Rd\mid \forall(a,b)\in U:
  (a,b)\cdot\SMat2{x\\y}=0\right\}.
\end{equation}
Furthermore, we have
  \begin{equation}\label{eq:U-perp}
 (U\cdot M)^\perp=M^{-1}\cdot U^\perp
  \end{equation}
for all $M\in\GL_2(R)$ and all $U\subset R^2$.

By changing from $(R^\circ)^2$ to $\Rd$ we obtain the \emph{dual
projective line} $\Pd(R)$ of $\PP(R)$ as alternative algebraic
description of the projective line $\PP(R^\circ)$. So an element of
$\Pd(R)$ has the form $M\cdot (1,0)^\T R$, with $M\in\GL_2(R)$.
Similarly, one obtains $\widehat\Sigma(K,R)$, the \emph{dual chain
geometry} of $\Sigma(K,R)$. Its set of chains is written as
$\widehat\fC(K,R)$. Since the module $R^2$ is free, it can be
identified with its bidual module. Up to this identification, the
dual of $\widehat\Sigma(K,R)$ is again $\Sigma(K,R)$.

\begin{thm}\label{thm:iota}
Let\/ $\Sigma(K,R)$ be a chain geometry. Then the  mapping
  \begin{equation}\label{eq:iota}
 \iota : \PP(R)\to\Pd(R) : p\mapsto p^\bot
\end{equation}
is an isomorphism of\/ $\Sigma(K,R)$ onto its dual.
\end{thm}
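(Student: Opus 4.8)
The plan is to verify that $\iota$ is a well-defined bijection respecting the two kinds of structure (points and chains) on both sides, essentially by tracking the action of $\GL_2(R)$ through the orthogonality operation $\bot$. The whole argument hinges on the transformation formula \eqref{eq:U-perp}: if $p=R(1,0)\cdot M$ with $M\in\GL_2(R)$, then $p^\bot=(R(1,0)M)^\bot=M^{-1}\cdot (R(1,0))^\bot=M^{-1}\cdot(1,0)^\T R$. Thus $\iota$ sends the $\GL_2(R)$-orbit description of $\PP(R)$ to the $\GL_2(R)$-orbit description of $\Pd(R)$, with the group acting by $M\mapsto M^{-1}$ on the left; one checks separately that $(R(1,0))^\bot=(1,0)^\T R$ by a direct computation from the definition \eqref{eq:orthogonal}.

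First I would establish that $\iota$ is well defined, i.e.\ that $p^\bot$ really lies in $\Pd(R)$ and does not depend on the chosen admissible representative of $p$. Well-definedness on representatives is immediate since $R(1,0)$ depends only on $p$, not on the pair; membership in $\Pd(R)$ follows from the computation above, which exhibits $p^\bot$ in the required form $M^{-1}\cdot(1,0)^\T R$. Next I would show $\iota$ is bijective: injectivity follows because one can recover $p$ from $p^\bot$ (passing to the bidual, $\bot$ applied twice is the identity on points, using that $R^2$ is free and identified with its bidual as noted just before the theorem); surjectivity follows because every element of $\Pd(R)$ has the form $N\cdot(1,0)^\T R$ with $N\in\GL_2(R)$, and this equals $(R(1,0)N^{-1})^\bot=\iota(R(1,0)N^{-1})$.

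Then comes the structure-preservation part. A chain of $\Sigma(K,R)$ is, by definition, an image of the standard chain $\cC$ under some $M\in\GL_2(R)$; so a generic chain is $\{R(k,l)\cdot M: K(k,l)\in\PP(K)\}$. Applying $\iota$ and using \eqref{eq:U-perp} termwise, its image is $\{M^{-1}\cdot (R(k,l))^\bot : K(k,l)\in\PP(K)\}$. The key sub-step is to identify $(R(k,l))^\bot$ for an admissible $(k,l)$ over $K$ with a point of the dual standard chain, i.e.\ to show that $\bot$ carries $\cC$ onto the standard chain of $\widehat\Sigma(K,R)$; this is again a direct linear-algebra computation inside the subfield $K$, where everything behaves like ordinary projective duality, together with the observation that admissibility over $K$ transfers correctly. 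Granting that, the image of an arbitrary chain under $\iota$ is the $M^{-1}$-translate of the dual standard chain, hence a chain of $\widehat\Sigma(K,R)$; and conversely every dual chain arises this way, so $\iota$ maps $\fC(K,R)$ bijectively onto $\widehat\fC(K,R)$.

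The main obstacle I anticipate is the lemma that $\bot$ maps the standard chain $\cC$ exactly onto the standard chain of the dual geometry, with the correct handling of admissible pairs. Over a noncommutative $R$ one must be careful that left-proportionality of representatives on the $\PP(R)$ side corresponds to right-proportionality on the $\Pd(R)$ side, and that a pair $(k,l)$ admissible over $K$ yields, via $\bot$, a column admissible over $K^\circ$; the subfield hypothesis on $K$ is what makes this clean, since within $K$ every nonzero pair is admissible. Once this compatibility at the level of standard chains is nailed down, the rest is formal bookkeeping with \eqref{eq:U-perp} and the $M\mapsto M^{-1}$ intertwining of the $\GL_2(R)$-actions.
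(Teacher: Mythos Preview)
Your approach matches the paper's: compute $(R(1,0))^\bot$ directly, use (\ref{eq:U-perp}) to transport this along the $\GL_2(R)$-orbit with the group acting via $M\mapsto M^{-1}$, deduce bijectivity, and then verify that the standard chain goes to the dual standard chain so that all chains are carried to chains by another application of (\ref{eq:U-perp}).

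There is, however, a computational slip: $(R(1,0))^\bot$ is $(0,1)^\T R$, not $(1,0)^\T R$. From (\ref{eq:orthogonal}), a column $(x,y)^\T$ lies in $(R(1,0))^\bot$ exactly when $1\cdot x + 0\cdot y = x = 0$. This does not damage the logical skeleton of your argument---$(0,1)^\T R$ is equally an element of $\Pd(R)$, and the orbit formula $p^\bot = M^{-1}\cdot(0,1)^\T R$ works the same way---but it should be corrected before you carry out the standard-chain step. The paper makes that step concrete by using the elementary matrices $E(t)=\SMat2{t&1\\-1&0}$ and their inverses to compute $(R(t_1,1))^\iota = (-1,t_1)^\T R$ for all $t_1\in R$, so that $\cC=\{R(k,1)\mid k\in K\}\cup\{R(1,0)\}$ visibly maps onto $\{(-1,k)^\T R\mid k\in K\}\cup\{(0,1)^\T R\}$. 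Your proposal leaves this as a ``direct linear-algebra computation inside $K$'', which is fine in spirit, but given how easy it is to confuse sides and signs in the noncommutative left/right passage (as your slip illustrates), it is worth writing out explicitly.
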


\begin{proof}
Obviously,
\begin{equation}\label{eq:iota-0}
  (R(1,0))^\iota=\SMat2{0 \\ 1} R,
\end{equation}
and $R(1,0)$ is the only $\iota$-preimage of $(0,1)^\T R$. Each point
$p\in\PP(R)$ can be written in the form $p=R(1,0)\cdot M$ with
$M\in\GL_2(R)$. So (\ref{eq:U-perp}) implies that
$p^\iota=M^{-1}\cdot(0,1)^\T R\in\Pd(R)$ and that $\iota$ is
bijective. Let
\begin{equation}\label{eq:E}
  E(t):=\SMat2{t&1\\-1&0} \mbox{ with } t\in R.
\end{equation}
Then $E(t)\in\GL_2(R)$ with
\begin{equation}\label{eq:E-inv}
  E(t)^{-1}=\SMat2{0&-1\\1&t}=E(0)\cdot E(-t)\cdot E(0).
\end{equation}
Hence (\ref{eq:U-perp}) and (\ref{eq:iota-0}) imply
\begin{equation}\label{eq:iota-1}
   (R(t_1,1))^\iota=(-1,t_1)^\T R
\end{equation}
for all $t_1\in R$. So $\iota$ maps the standard chain
$\cC=\{R(k,1)\mid k\in K \}\cup\{R(1,0)\}\in\fC(K,R)$  onto
$\{(-1,k)^\T R\mid k\in K \} \cup \{(0,1)^\T R\}$, which is the
standard chain in $\widehat\Sigma(K,R)$. From the definition of
chains and (\ref{eq:U-perp}), the mapping $\iota$ yields a bijection
of $\fC(K,R)$ onto $\widehat\fC(K,R)$.
\end{proof}

We refer to  $\iota$ as the \emph{canonical isomorphism}
$\Sigma(K,R)\to\widehat\Sigma(K,R)$.

\begin{rem}
Each point $p\in\PP(R)$ is spanned by the first row of a matrix
$M\in\GL_2(R)$. From (\ref{eq:U-perp}) and (\ref{eq:iota-0}) it
follows that  $p^\iota$ is spanned by the second column of $M^{-1}$.
Thus, whenever one has an algorithm to invert matrices of $\GL_2(R)$
then it is also possible to calculate explicitly the $\iota$-image of
a point given in that form. For example, when $R$ is commutative then
$R(a,b)^\iota=(-b,a)^\T R$ for all admissible pairs $(a,b)\in R^2$.
\end{rem}

\begin{rem}\label{rem-iota}
We recall that the \emph{elementary subgroup\/} $\E_2(R)$ of
$\GL_2(R)$ is generated by the set of all matrices (\ref{eq:E}); cf.\
\cite[p.~5]{cohn-66}. Each pair
  \begin{equation}\label{eq:E-Produkt}
     (a,b):=(1,0)\cdot E(t_{n})\cdot E(t_{n-1})\cdots E(t_1)
  \end{equation}
with $t_1, t_2,\ldots,t_{n}\in R$ and $n\geq 0$ is admissible. A point of
$\PP(R)$ is in the connected component of $R(1,0)$ if, and only if, it has a
representative $(a,b)$ of this form; see \cite[Theorem~3.2]{blu+h-00c}.

Suppose now that $(a,b)\in R^2$ is given according to
(\ref{eq:E-Produkt}). From (\ref{eq:U-perp}), (\ref{eq:E-inv}), and
$E(0)^2=-I$, where $I$ denotes the identity in $\GL_2(R)$, the point
$R(a,b)^\iota$ is represented by
\begin{equation}\label{eq:E-Produkt-iota}
   \SMat2{v\\w}:=(-I)^{n-1} E(0)\cdot E(-t_{1})\cdot E(-t_{2})\cdots
E(-t_{n})\cdot
    E(0)\cdot\SMat2{0\\1}.
\end{equation}
Clearly, the irrelevant factor $(-I)^{n-1}$ may be omitted. In
particular, this includes formulae (\ref{eq:iota-0}) and
(\ref{eq:iota-1}) by letting $n=0$ and $n=1$, respectively. On the
other hand, for $n=2,3$ we get from (\ref{eq:E-Produkt}) and
(\ref{eq:E-Produkt-iota})
\begin{eqnarray}
   (R(t_2t_1-1,t_2))^\iota &=&
   (-t_2,t_1t_2-1)^\T R,\label{eq:iota-2}\\
   (R(t_3 t_2 t_1-t_3-t_1,t_3 t_2 - 1))^\iota &=&
   (-t_2 t_3 + 1,t_1 t_2 t_3 - t_1 - t_3 )^\T R\label{eq:iota-3}
\end{eqnarray}
for all $t_1,t_2,t_3\in R$.

If the connected component of $R(1,0)$ has finite diameter $m$ then
each of its points has a representative of the form
(\ref{eq:E-Produkt}) with $0\leq n\leq m$. See
\cite[formula~(10)]{{blu+h-00c}}. Also, from $E(0)^2=-I$ and
$(1,0)\cdot E(t)=(1,0)\cdot E(1)\cdot E(t+1)$ for all $t\in R$, it is
enough to consider products where $n=\max\{2,m\}$.

The explicit formula (\ref{eq:iota-2}) describes the $\iota$-images
of \emph{all} points of $\PP(R)$ if $R$ is a ring of \emph{stable
rank} $2$, since here $\PP(R)$ is connected and $m\leq 2$. See
\cite[Proposition 1.4.2]{herz-95} and \cite[Example~5.2
(b)]{blu+h-00c}. We add in passing that the stable rank of $R$ equals
the stable rank of its opposite ring $R^\circ$ \cite[2.2]{veld-85}.
See Example \ref{exa:endo} below for an application of formula
(\ref{eq:iota-3}).
\end{rem}

\section{Compatibility and Dual Compatibility}

We consider a chain geometry $\Sigma(K,R)$. For a fixed point
$p\in\PP(R)$ the set $\PP(R)_p$ consists of all points distant from
$p$, and $\fC(K,R)_p$ consists of all sets $\cD\setminus \{p\}$,
where $\cD$ is a chain through $p$. An element of $\fC(K,R)_p$ will
be called a \emph{block}. Altogether the \emph{residue of
$\Sigma(K,R)$ at $p$} is the incidence structure
\begin{equation}\label{eq:residuum}
  \Sigma(K,R)_p=(\PP(R)_p,\fC(K,R)_p).
\end{equation}
Cf.\ \cite[Section~4]{blu+h-99a}.

Let $\infty:=R(1,0)$. The chains $\cD_1,\cD_2$ through $\infty$ are
called \emph{compatible at} $\infty$ if they belong to the same orbit
under the action of the group
\begin{equation}\label{eq:delta}
   \Delta:=\{\SMat2{a&0\\c&1}\mid a\in R^*, c\in R\}\subset\GL_2(R)
\end{equation}
on $\fC(K,R)$. Then also the blocks $\cD_1\setminus\{\infty\}$ and
$\cD_2\setminus\{\infty\}$ of $\Sigma(K,R)_\infty$ will be called
compatible. By definition, the compatibility of chains (at a common
point) is a $\GL_2(R)$-invariant notion (see \cite[Section
3]{blu+h-99a}), whence one has a compatibility relation on the set of
blocks of each residue $\Sigma(K,R)_p$.

It suffices to consider the case where $p=\infty$. A point $R(a,b)$ is distant
from $\infty$ exactly if $b$ is a unit in $R$. The bijection
\begin{equation}\label{eq:R}
    \PP(R)_\infty\to R : R(x,1)\mapsto x
\end{equation}
will be used to identify $\PP(R)_\infty$ with $R$. By
\cite[Theorem~4.1]{blu+h-99a}, a subset of $\fC(K,R)_\infty$ is a
\emph{compatibility class} exactly if it has the form
\begin{equation}\label{eq:comp-class}
  \{(u^{-1}K u)a+c\mid a\in R^*,\;c\in R\} \mbox { with } u\in R^*.
\end{equation}
Recall that a \emph{partial affine space} is an incidence structure
resulting from an affine space by removing certain parallel classes
of lines (but no points). If $K^*$ is not normal in $R^*$ then the
residue $\Sigma(K,R)_\infty$ cannot be embedded in any affine space,
since the points $0,1\in R$ are joined by more than one block, namely
by all subfields $u^{-1}K u$, where $u\in R^*$. However, the point
set $\PP(R)_\infty$ together with one compatibility class
(\ref{eq:comp-class}) forms a partial affine space which extends to
the affine space $\Aa(u^{-1}K u,R)$ on the \emph{left vector space}
$R$ over $u^{-1}K u$; see \cite[Theorem~4.2]{blu+h-99a}.

The construction described above can be carried over to the dual
chain geometry $\widehat\Sigma(K,R)$. We restrict ourselves to the
residue of $\widehat\Sigma(K,R)$ at $\infty^\iota=(0,1)^\T R$, where
$\iota$ is the canonical isomorphism. The counterpart of (\ref{eq:R})
is the bijection
\begin{equation}\label{eq:Rdach}
    \Pd(R)_{\infty^\iota}\to R : \SMat2{-1\\x} R \mapsto x.
\end{equation}

Two chains of $\widehat\Sigma(K,R)$ through $(1,0)^\T R$ are
compatible at $(1,0)^\T R$, if they belong to the same orbit with
respect to the group $\Delta^\T:=\{D^\T\mid D\in\Delta\}$ acting on
$\Pd(R)$ from the left; cf.\ (\ref{eq:delta}).  So the compatibility
in $\widehat\Sigma(K,R)_{\infty^\iota}$ is governed by the group
\begin{equation}\label{eq:deltadach}
   \widehat\Delta:=M\cdot\Delta^\T\cdot M^{-1} =
   \{\SMat2{1&0\\c&d}\mid c\in R, d\in R^*\}
   \subset\GL_2(R),
\end{equation}
where $M\in\GL_2(R)$ is any matrix taking $(1,0)^\T R$ to
$\infty^\iota=(0,1)^\T R$. The partial affine spaces defined by the
compatibility classes  in $\widehat\Sigma(K,R)_{\infty^\iota}$ are embedded in
affine spaces $\widehat\Aa(u K u^{-1},R)$, where $R$ is considered as
\emph{right vector space} over $u K u^{-1}$.

Let $\cD_1,\cD_2$ be chains of $\Sigma(K,R)$ with common point $p$.
We say that $\cD_1,\cD_2$ are \emph{dually compatible at} $p$ if, and
only if, $\cD_1^\iota,\cD_2^\iota\in\widehat\fC(K,R)$ are compatible
at $p^\iota$. Analogously, we define dual compatibility of blocks of
a residue.

\begin{thm}\label{th:vergleich}
Suppose that\/ $\PP(R)_\infty$ and\/ $\Pd(R)_{\infty^\iota}$ are
identified with the ring $R$ according to (\ref{eq:R}) and
(\ref{eq:Rdach}), respectively. Then the following holds:
\begin{enumerate}\itemsep=0cm
  \item Each point of\/ $\PP(R)_\infty$ and its $\iota$-image are the same.
  \item The residue of\/ $\Sigma(K,R)$ at $\infty$ coincides with
        the residue of\/ $\widehat\Sigma(K,R)$ at\/ $\infty^\iota$.
  \item The equivalence relations of ``compatibility'' and ``dual
        compatibility'' on the set of blocks are the same exactly if
        the multiplicative group $K^*$ is normal in the multiplicative group $R^*$.
\end{enumerate}
\end{thm}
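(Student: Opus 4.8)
The plan is to verify the three assertions in turn, using the explicit formulae for $\iota$ established in Theorem~\ref{thm:iota} and the descriptions of compatibility classes already recalled.

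\textbf{Part (a).} First I would take a point of $\PP(R)_\infty$, which by (\ref{eq:R}) has the form $R(x,1)$ with $x\in R$, corresponding to the ring element $x$. By (\ref{eq:iota-1}) we have $(R(x,1))^\iota=(-1,x)^\T R$, and by (\ref{eq:Rdach}) this element of $\Pd(R)_{\infty^\iota}$ corresponds to the ring element $x$ as well. Hence under the two identifications the point and its $\iota$-image carry the same label in $R$; this is a one-line check.

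\textbf{Part (b).} Since $\iota$ is an isomorphism of $\Sigma(K,R)$ onto $\widehat\Sigma(K,R)$ (Theorem~\ref{thm:iota}) sending $\infty$ to $\infty^\iota$, it induces an isomorphism of residues $\Sigma(K,R)_\infty\to\widehat\Sigma(K,R)_{\infty^\iota}$. By part (a) this isomorphism is the identity on the common point set $R$, so it must also be the identity on blocks, i.e.\ the two residues literally coincide as incidence structures. The only thing to spell out is that a block of $\Sigma(K,R)_\infty$, being $\cD\setminus\{\infty\}$ for a chain $\cD$ through $\infty$, is carried by $\iota$ to $\cD^\iota\setminus\{\infty^\iota\}$, which is exactly a block of $\widehat\Sigma(K,R)_{\infty^\iota}$, and that the underlying point sets agree by (a).

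\textbf{Part (c).} This is the substantive point and the main obstacle. Here I would make both compatibility relations completely explicit as partitions of the block set of the common residue. On one side, by (\ref{eq:comp-class}) the compatibility classes are the sets $\{(u^{-1}Ku)a+c\mid a\in R^*,\ c\in R\}$ with $u\in R^*$; equivalently, a block is a coset $La+c$ with $L=u^{-1}Ku$ a conjugate subfield, and two blocks are compatible iff the associated subfields $L$ coincide. On the other side, dual compatibility is governed by $\widehat\Delta=\{\SMat2{1&0\\c&d}\mid c\in R,\ d\in R^*\}$ acting from the left, and — as recalled before the theorem — its orbits yield partial affine spaces sitting inside the right vector spaces $\widehat\Aa(uKu^{-1},R)$; so a dual-compatibility class consists of the blocks that are cosets (in the right-module sense) $a\cdot(uKu^{-1})+c$ for a fixed conjugate subfield $uKu^{-1}$. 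Thus ``compatibility = dual compatibility'' would force, for each block $\cD\setminus\{\infty\}$, that its left-coset structure over some $u^{-1}Ku$ and its right-coset structure over some $u'K(u')^{-1}$ determine the same partition of blocks into classes. Concretely, the standard chain block $K$ (the image of $\cC$) is compatible with exactly the blocks $u^{-1}Ku\cdot a+c$, and dually compatible with exactly the blocks $a\cdot uKu^{-1}+c$; I would show these two families of blocks through $0$ and $1$ coincide for all choices precisely when every conjugate of $K^*$ equals $K^*$, i.e.\ when $K^*\trianglelefteq R^*$. The implication ``$K^*$ normal $\Rightarrow$ relations agree'' is then immediate, since both relations collapse to the single partition into cosets of the unique subfield $K$ (all conjugates being $K$). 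For the converse I expect to argue contrapositively: if $u^{-1}Ku\neq K$ for some $u\in R^*$, then $0$ and $1$ lie on the block $u^{-1}Ku$ which is compatible with $K$ only if $u^{-1}Ku=K$, so the two blocks $K$ and $u^{-1}Ku$ through $\{0,1\}$ are not compatible, yet — because $\iota$ is an isomorphism and by the symmetry between $K$ and its conjugates in the dual picture — tracing through (\ref{eq:iota-2}) or the $\widehat\Delta$-action shows they \emph{are} dually compatible (or vice versa), contradicting equality of the relations. The delicate part is getting the dual side phrased carefully enough — the passage from $\Delta^\T$-orbits on $\Pd(R)$ to $\widehat\Delta$-orbits and then to right-coset decompositions over conjugates $uKu^{-1}$ — and checking that the two subfield-assignments $L=u^{-1}Ku$ and $L'=u'K(u')^{-1}$ attached to a single block are forced to produce matching partitions exactly under normality; I would handle this by reducing everything to the block $K$ and its images under $\Delta$ and $\widehat\Delta$, where the normaliser of $K^*$ in $R^*$ appears directly as the stabiliser.
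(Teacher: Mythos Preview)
Parts (a) and (b) are fine and match the paper's approach; your (b) via ``$\iota$ is an isomorphism and acts identically on points by (a)'' is a clean variant of the paper's direct observation that the blocks in either residue are exactly the sets $dKa+c$ with $a,d\in R^*$, $c\in R$.

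In part (c), however, your witness does not work. You take the blocks $K$ and $u^{-1}Ku$ through $0$ and $1$ and want one of the two relations to hold and the other to fail. But run the computation: the compatibility class of $K$ is $\{Ka+c\}$, and the \emph{only} block of this class through $0$ and $1$ is $K$ itself (if $0\in Ka+c$ then $c=0$; if then $1\in Ka$ then $a\in K^*$ and $Ka=K$). Exactly the same argument applied to the dual class $\{aK+c\}$ shows that the only dually-compatible block through $0$ and $1$ is again $K$. So $K$ and $u^{-1}Ku$ are \emph{neither} compatible \emph{nor} dually compatible; your ``(or vice versa)'' hedge does not save the argument. More conceptually: the blocks through $0$ and $1$ are precisely the conjugates $v^{-1}Kv$, and on this set both partitions are discrete (each conjugate is alone in its class), so no pair of blocks through $0$ and $1$ can separate the two relations.

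The fix is to change the second base point. Following the paper, pick $u\in R^*$ with $uK\neq Ku$ and compare the blocks $K$ and $uK$ through $0$ and $u$. The block $uK$ lies in the dual compatibility class $\{aK+c\}$ of $K$ by construction, so $K$ and $uK$ are dually compatible. On the other hand, the unique block of the compatibility class $\{Ka+c\}$ through $0$ and $u$ is $Ku$, and $Ku\neq uK$; hence $K$ and $uK$ are not compatible. This gives the contrapositive. The converse direction (``$K^*$ normal implies the relations agree'') you have correctly: both relations collapse to a single class.
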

\begin{proof}
(a) This is obviously true.

(b) In both residues the blocks are exactly the sets $dKa +c$ with $a,d\in R^*$
and $c\in R$.

(c) Suppose that $K^*$ is not normal in $R^*$. Then there is a $u\in R^*$ with
$u K\neq K u$. The compatibility class of the block $K$ contains exactly the
blocks $K a+c$ with $a\in R^*$ and $c\in R$. The only block of this class
running through $0$ and $u$ is $K u$. We read off from $0,u\in u K\neq K u$
that the block $u K$ is not compatible to $K$. However, the dual compatibility
class of $K$ contains $u K$. Therefore the relations are different. On the
other hand, if $K^*$ is normal in $R^*$ then all blocks are compatible and
dually compatible; see \cite[Theorem~4.2]{blu+h-99a}. This completes the proof.
\end{proof}

\begin{rem}\label{rem-comp}
From Theorem \ref{th:vergleich}(c) and formula (\ref{eq:U-perp}) we
obtain the following: The canonical isomorphism
$\iota:\Sigma(K,R)\to\widehat\Sigma(K,R)$ preserves compatibility (at
all points) if, and only if, $K^*$ is normal in $R^*$. In particular,
this shows that the notion of compatibility needs not be invariant
under isomorphisms of chain geometries.
\end{rem}

Let $S_\infty$ be the set of all $\fL\subset\fC(K,R)_\infty$ such
that $(\PP(R)_\infty,\fL)$ is a partial affine space. We have seen
before that each (dual) compatibility class of blocks belongs to
$S_\infty$. From \cite[Lemma 2.1]{blu+h-99a} and \cite[Proposition
2.2]{blu+h-99a}, two distinct points $R(x,1)$, $R(y,1)$ of the
residue are joined by at least one block exactly if they are distant.
This is equivalent to $y-x\in R^*$. From (\ref{eq:comp-class}), the
set of blocks through two distant points of $\PP(R)_\infty$ has
exactly one element in common with each (dual) compatibility class.
This means that each (dual) compatibility class is a maximal element
of $S_\infty$ with respect to inclusion. One could conjecture that
the maximal elements of $S_\infty$ were exactly the (dual)
compatibility classes. However, there may also be other maximal
elements of $S_\infty$:

\begin{exa}\label{exa:mix}
Let $R=\HH$ be the field of real quaternions with the usual $\RR$-basis
$\{1,i,j,k\}$. Further, let $K=\CC=\RR+\RR i$ be a subfield of complex numbers.
The blocks of $\fC(\CC,\HH)_\infty$ compatible to $\CC$ are exactly the lines
of the complex affine plane $\Aa(\CC,\HH)$. Put
\begin{equation}\label{eq:baer}
 \cB:=\{ a (\RR+\RR j) + c
 \mid
 a\in\CC^*,\;c\in\HH\}.
\end{equation}
Each element of $\cB$ is a block, since $\RR+\RR
j=(1+k)^{-1}\CC(1+k)$, but not a line of $\Aa(\CC,\HH)$. Obviously,
the elements of $\cB$ are Baer subplanes of $\Aa(\CC,\HH)$.  We apply
the well known procedure of \emph{derivation}: All lines of
$\Aa(\CC,\HH)$ that are parallel to a line of the Baer subplane
$\RR+\RR j$ are removed and instead the Baer subplanes belonging to
$\cB$ are introduced as ``new lines''. This gives a
(non-desarguesian) affine plane with point set $\HH$. Cf.\
\cite[Theorem~3.14]{john-2000}. By construction, the set of lines of
the derived plane is a maximal element of $S_\infty$, but it is
neither a compatibility class nor a dual compatibility class.

A reader who is familiar with \emph{elliptic geometry} will easily
verify the following: The lines of the projective $3$-space
$\PP(\RR,\HH$) (which carries the structure of an \emph{elliptic
space} coming from the Euclidean norm on quaternions) are exactly the
blocks of $\fC(\CC,\HH)_\infty$ through $0\in\HH$. Two blocks $B_1$,
$B_2$ through $0$ are compatible exactly if there is a \emph{right
Clifford translation} ($\RR x\mapsto \RR x a$, $a\in\HH^*$) of
$\PP(\RR,\HH)$ taking $B_1$ to $B_2$. This characterizes the lines
$B_1$ and $B_2$ as \emph{left Clifford parallel}. Similarly, dual
compatibility corresponds to \emph{right} Clifford parallelism. The
set of blocks through $0$ that are compatible to $\CC$ appears as a
regular spread (elliptic linear congruence of lines) of the elliptic
space (fig.~\ref{abb1}). All lines of this spread are left parallel.
See, among others, \cite[Chapter VII]{cox-78} and
\cite[p.~76]{kar+k-88}. It is well known that here the process of
derivation means that one regulus $\cR$ of this spread is replaced
with its opposite regulus $\cR^\circ$ (fig.~\ref{abb2}); see, e.g.,
\cite[p.~101--102]{bruck+b-64}. The lines of $\cR^\circ$ are mutually
right Clifford parallel, since $\cB$ is a subset of a dual
compatibility class.
{\unitlength1cm
      \begin{center}
      \begin{minipage}[t]{4cm}
         \begin{picture}(4.0,5.11)
         \put(0,0)
         {\includegraphics[width=4.0cm]{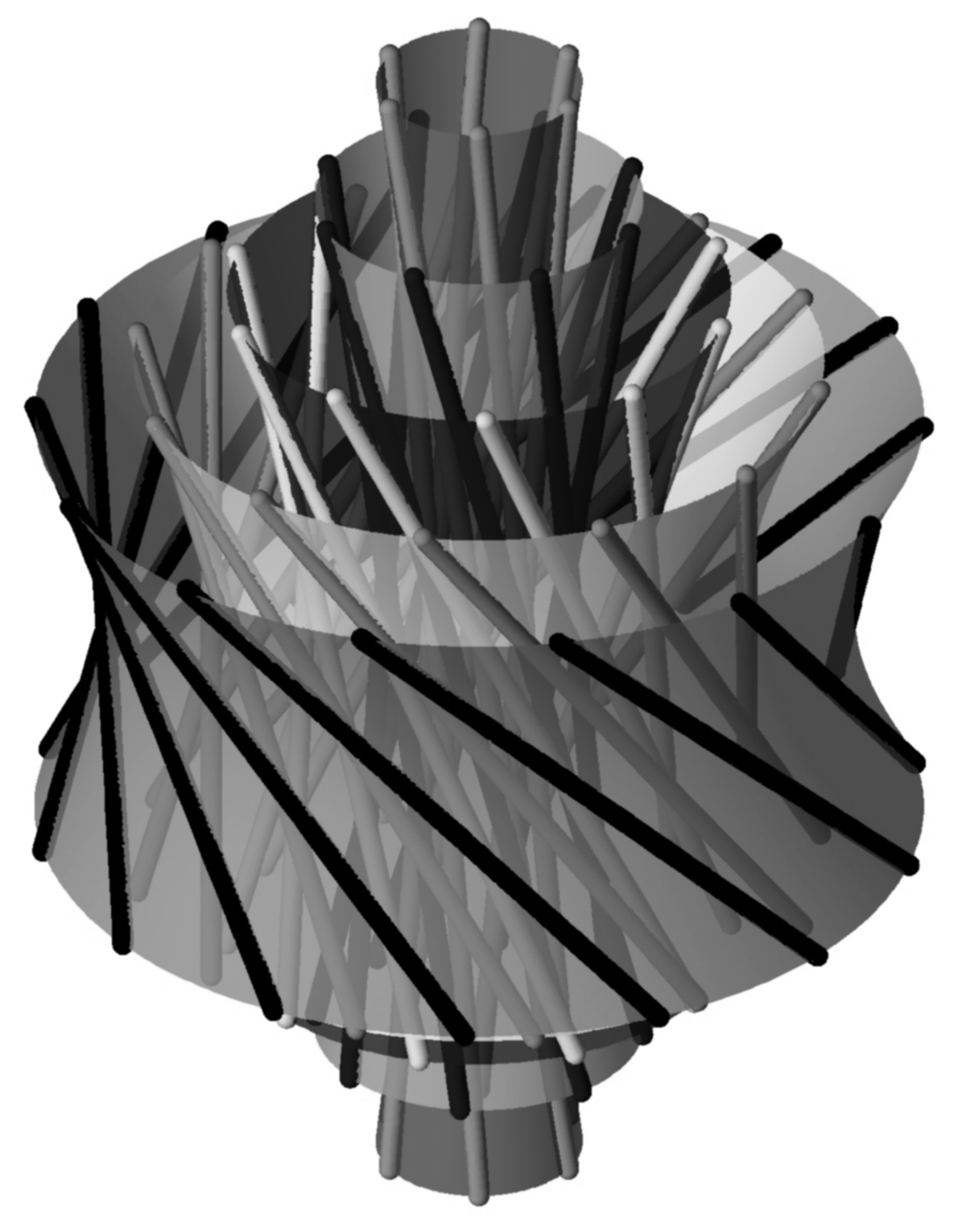}}
         \end{picture}
         {\refstepcounter{abbildung}\label{abb1}
          \centerline{Fig.~\ref{abb1}.}}
         \end{minipage}
         \hspace{1.5cm}
      \begin{minipage}[t]{4cm}
         \begin{picture}(4.0,5.11)
         \put(0,0)
         {\includegraphics[width=4.0cm]{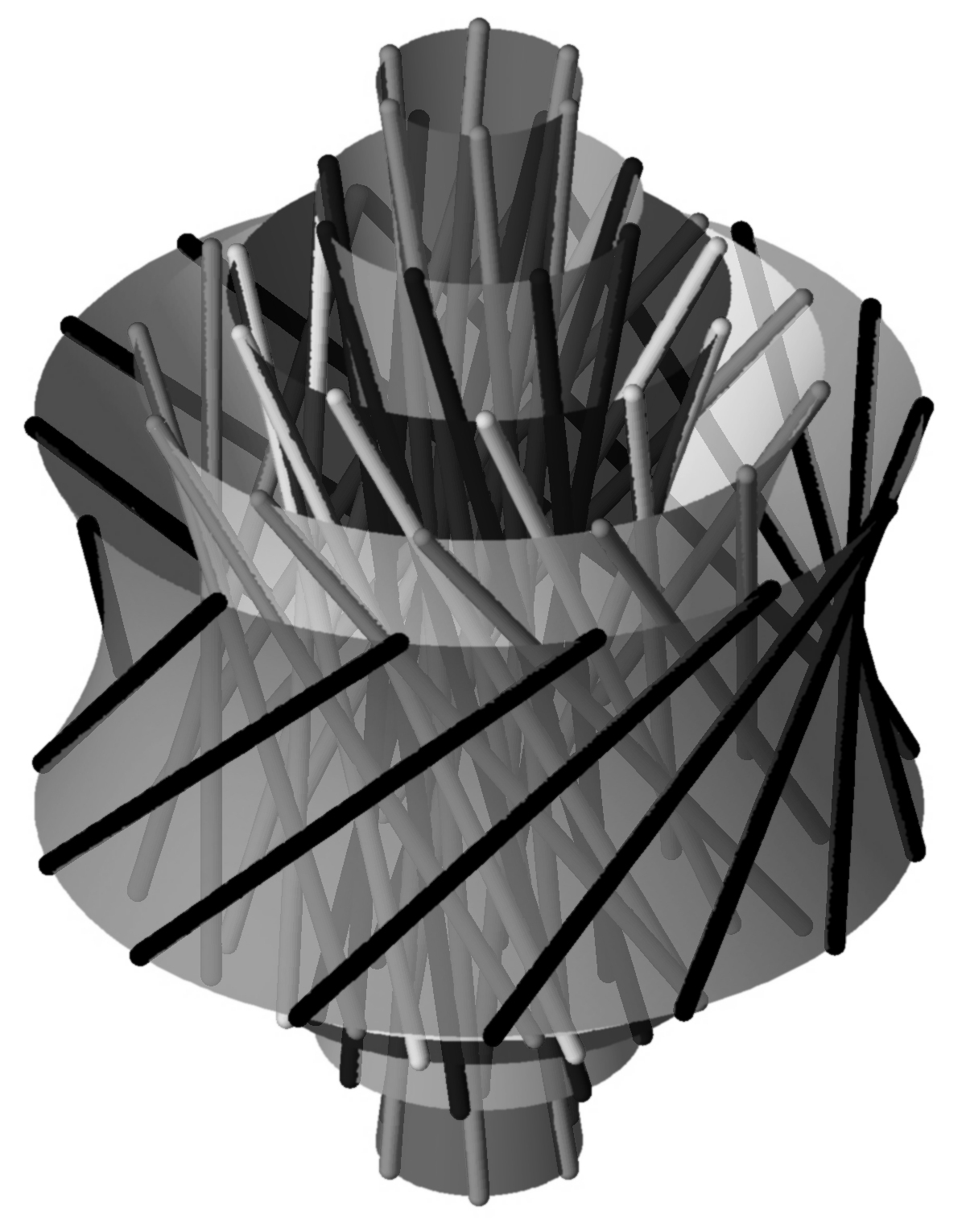}}
        \end{picture}
        {\refstepcounter{abbildung}\label{abb2}
          \centerline{Fig.~\ref{abb2}.}}
      \end{minipage}
      \end{center}
}%
\end{exa}

\section{Isomorphisms}

In this section we consider two chain geometries $\Sigma(K,R)$ and
$\Sigma(K',R')$.

\begin{rem}\label{rem-ringiso}
For each $u'\in {R'}^*$ we have $\Sigma(K',R') =
\Sigma({u'}^{-1}K'u',R')$ by virtue of the transformation of
$\PP(R')$ given by the matrix $\diag({u'},{u'})$. So, if $\phi:R\to
R'$ is an isomorphism of rings such that $K^\phi = {u'}^{-1}K'u'$
holds for a suitable $u'\in {R'}^*$, then
\begin{equation}\label{eq:phiquer}
\overline\phi:\PP(R)\to\PP(R'):R(a,b)\mapsto R'(a^\phi,b^\phi)
\end{equation}
is an isomorphism $\Sigma(K,R)\to\Sigma(K',R')$ mapping
$\infty=R(1,0)$ to $\infty':=R'(1',0')$.

The restriction  of  $\overline\phi$  to ${\PP(R)_\infty}$ is an
isomorphism from the residue $\Sigma(K,R)_\infty$ onto
$\Sigma(K',R')_{\infty'}$. According to the identification
(\ref{eq:R}), applied to ${\PP(R)_\infty}$ and $\PP(R')_{\infty'}$,
this restriction coincides with $\phi$. Using (\ref{eq:comp-class}),
one sees that $\phi$ preserves compatibility of blocks. The same
holds for the restriction of $\overline\phi$ to any other residue,
because the actions of $\GL_2(R)$ and $\GL_2(R')$ on $\PP(R)$ and
$\PP(R')$, respectively, are isomorphic via $\overline\phi$.
Altogether, $\overline\phi$ preserves compatibility of chains.
\end{rem}
 We now study the case of antiisomorphisms.

\begin{thm}\label{thm:iso}
Let $\phi:R\to R'$ be an  antiisomorphism of rings such that $K^\phi
= {u'}^{-1}K'u'$ for some $u'\in {R'}^*$. Then the product of the
canonical isomorphism $\iota:\PP(R)\to\Pd(R)$ and the mapping
\begin{equation}\label{eq:phidach}
\widehat\phi:\Pd(R)\to \PP(R'): \SMat2{v\\w}R
  \mapsto R'(v^\phi,w^\phi).
\end{equation}
is an isomorphism of\/ $\Sigma(K,R)$ onto\/ $\Sigma(K',R')$.
\end{thm}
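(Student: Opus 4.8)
The plan is to reduce Theorem \ref{thm:iso} to Theorem \ref{thm:iota} by showing that $\widehat\phi$ is an isomorphism of the dual chain geometry $\widehat\Sigma(K,R)$ onto $\Sigma(K',R')$; then the composite $\iota\cdot\widehat\phi$ is an isomorphism $\Sigma(K,R)\to\Sigma(K',R')$ because $\iota$ is already known to be an isomorphism onto $\widehat\Sigma(K,R)$. So the entire task is to analyse $\widehat\phi$.

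First I would recall that, up to notation, $\widehat\Sigma(K,R)$ is the chain geometry $\Sigma(K^\circ,R^\circ)$: elements of $\Pd(R)$ are columns, and the left $R^\circ$-module structure is the one used to form $\PP(R^\circ)$. Since $\phi:R\to R'$ is an antiisomorphism, it is an honest isomorphism $R^\circ\to R'$ of rings. Under this reading the map $\widehat\phi$ of (\ref{eq:phidach}) is literally the map $\overline{\phi}$ of Remark \ref{rem-ringiso} built from the ring isomorphism $\phi:R^\circ\to R'$ and from the subfield $K^\circ\subset R^\circ$. The hypothesis $K^\phi={u'}^{-1}K'u'$ says precisely that $\phi$ carries the subfield $K^\circ$ of $R^\circ$ onto a subfield of $R'$ conjugate to $K'$, which is exactly the hypothesis needed in Remark \ref{rem-ringiso}. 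Hence, by that remark (applied with $R$ replaced by $R^\circ$ and $K$ by $K^\circ$), $\widehat\phi$ is an isomorphism of chain geometries $\widehat\Sigma(K,R)=\Sigma(K^\circ,R^\circ)\to\Sigma(K',R')$.

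The substantive point to verify carefully — and the place I expect the only real friction — is the bookkeeping that identifies $\widehat\phi$ with $\overline{\phi}$ in the sense of (\ref{eq:phiquer}): one must check that an admissible pair for $R^\circ$ (equivalently, a column representing a point of $\Pd(R)$) is carried by the entrywise application of $\phi$ to an admissible pair for $R'$, and that $\GL_2(R^\circ)$ is mapped onto $\GL_2(R')$ compatibly with the module actions. This is routine once one writes $\GL_2(R^\circ)$ as the transpose-inverse image of $\GL_2(R)$ and uses that $\phi$ reverses products: a matrix $M\in\GL_2(R)$ acting on rows of $R^2$ on the right corresponds, after dualizing, to $M^{-1}$ acting on columns on the left, and applying $\phi$ entrywise turns this into a matrix in $\GL_2(R')$ acting on the left. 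In particular one checks on the standard chain: by (\ref{eq:iota-1}) the chain $\cC^\iota$ in $\widehat\Sigma(K,R)$ is $\{(-1,k)^\T R\mid k\in K\}\cup\{(0,1)^\T R\}$, and its $\widehat\phi$-image is $\{R'(-1^\phi,k^\phi)\mid k\in K\}\cup\{R'(0^\phi,1^\phi)\}=\{R'((-1)',k^\phi)\mid k^\phi\in K^\phi\}\cup\{R'(0',1')\}$, which is a chain of $\Sigma(K',R')$ because $K^\phi$ is conjugate to $K'$ and chains are $\GL_2(R')$-invariant. Since chains are the $\GL_2$-orbits of the standard chain and $\widehat\phi$ intertwines the group actions, every chain of $\widehat\Sigma(K,R)$ goes to a chain of $\Sigma(K',R')$, and bijectivity on points and on chains follows from that of $\phi$ together with Theorem \ref{thm:iota}. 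Composing with $\iota$ completes the proof.
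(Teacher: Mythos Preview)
Your proof is correct and follows exactly the paper's approach: observe that $\phi$ is a ring isomorphism $R^\circ\to R'$, invoke Remark~\ref{rem-ringiso} to conclude that $\widehat\phi$ is an isomorphism $\widehat\Sigma(K,R)=\Sigma(K^\circ,R^\circ)\to\Sigma(K',R')$, and compose with $\iota$. The paper's own proof is the terse three-line version of this, omitting the bookkeeping checks you spell out in your second paragraph.
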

\begin{proof}
The antiisomorphism  $\phi:R\to R'$ is an isomorphism $R^\circ\to
R'$. So, from Remark \ref{rem-ringiso}, the mapping $\widehat\phi$ is
an isomorphism of $\widehat\Sigma(K,R)$ onto $\Sigma(K',R')$, whence
$\iota\widehat\phi$ has the required properties.
\end{proof}

\begin{rem}
We conclude from Remark \ref{rem-comp} and Remark \ref{rem-ringiso}
that the isomorphism $\iota\widehat\phi$ preserves compatibility if,
and only if, $K^*$ is normal in $R^*$.
\end{rem}

Theorem \ref{thm:iso} does not give an explicit description of the
isomorphism $\iota\widehat\phi$ from $\Sigma(K,R)$ onto
$\Sigma(K',R')$, since we did not describe the canonical isomorphism
$\iota$ explicitly either. As in Remark \ref{rem-iota}, we know more
for certain points:

\begin{rem}\label{rem-sigma}
Let $\phi:R\to R'$ be given as in Theorem \ref{thm:iso}. For
$M\in\GL_2(R)$ let $M^\phi$ be the matrix in $\GL_2(R')$ obtained by
applying $\phi$ to the entries of $M$. We observe that $M\mapsto
(M^\T)^\phi$ is an antiisomorphism of groups. Also, for each point
$q\in\Pd(R)$ and each matrix $M\in\GL_2(R)$ we have
\begin{equation}\label{eq:M-phi}
  (M\cdot q)^{\widehat\phi}=q^{\widehat\phi}\cdot (M^\T)^\phi.
\end{equation}
The product $\iota\widehat\phi$ is an isomorphism of $\Sigma(K,R)$
onto $\Sigma(K',R')$. However, by (\ref{eq:iota-0}) and
(\ref{eq:phidach}), it takes $R(1,0)$ to $R'(0',1')$ rather than to
$R'(1',0')$. So let $\eta: R'(a',b')\mapsto R'(b',-a')$ be the
transformation of $\PP(R')$ induced by $E(0')^{-1}$. We focus our
attention to the isomorphism
\begin{equation}\label{eq:sigma}
  \sigma:=\iota\widehat\phi\eta
\end{equation}
of $\Sigma(K,R)$ onto $\Sigma(K',R')$. By construction,
\begin{equation}\label{eq:sigma-0}
    (R(1,0))^\sigma = R'(1',0').
\end{equation}
We aim at an explicit computation of the $\sigma$-images of all points in the
connected component of $R(1,0)$: Let $p=R(a,b)$ with $(a,b)$ as in
(\ref{eq:E-Produkt}), whence $p^\iota=(v,w)^\T R$ with $(v,w)^\T$ as in
(\ref{eq:E-Produkt-iota}). Using (\ref{eq:M-phi}) and $E(-t)^\T=(-I)\cdot E(t)$
we obtain from (\ref{eq:E-Produkt-iota}) and (\ref{eq:sigma}) that
\begin{equation}\label{eq:Produkt-sigma}
   p^\sigma=R'(1',0')\cdot
  E(t_n^\phi)\cdot E(t_{n-1}^\phi)\cdots E(t_1^\phi).
\end{equation}

In particular, we have
\begin{eqnarray}
    (R(t_1,1))^\sigma &=& R'(t_1^\phi,1'),\label{eq:sigma-1}\\
   (R(t_2t_1-1,t_2))^\sigma &=&
   R'(t_2^\phi t_1^\phi-1',t_2^\phi),\label{eq:sigma-2}\\
   R((t_3 t_2 t_1-t_3-t_1,t_3 t_2 - 1))^\sigma &=&
   R'(t_3^\phi t_2^\phi t_1^\phi-t_3^\phi-t_1^\phi,
      t_3^\phi t_2^\phi-1')\label{eq:sigma-3}
\end{eqnarray}
for all $t_1,t_2,t_3\in R$, as counterparts of formulae (\ref{eq:iota-1}),
(\ref{eq:iota-2}), and (\ref{eq:iota-3}).

From \cite[Theorem 2.4]{bart-89}, for rings of stable rank $2$ an
isomorphism of $\Sigma(K,R)$ onto $\Sigma(K',R')$ can be
\emph{defined} according to (\ref{eq:sigma-2}), even when $\phi:R\to
R'$ is a \emph{Jordan isomorphism} satisfying certain conditions on
the image of $K$. See also \cite{blunck-94}, \cite[9.1]{herz-95}, and
\cite{maeu+m+n-80} for related results.
\end{rem}

\begin{exa}\label{exa:endo}
Let $R=\End_K(V)$ be the endomorphism ring of an infinite dimensional
vector space $V$ over a commutative field $K$. For each $a\in R$ the
transpose mapping $a^\T$ is an endomorphism of the dual vector space.
We put $R':=\{a^\T\mid a\in R\}$ so that $\phi:R\to R':a\mapsto a^\T$
is an antiisomorphism of rings. The field $K=:K'$ can be embedded in
$R$ via $k\mapsto k\cdot\id_V$ and in $R'$ via $k\mapsto
k\cdot(\id_V)^\T$ ($k\in K$), whence $K^\phi=K'$. Then an isomorphism
$\sigma$ of the corresponding chain geometries is given by
(\ref{eq:phidach}) and (\ref{eq:sigma}). From
\cite[Theorem~5.3]{blu+h-00c}, the projective line $\PP(R)$ is
connected and its diameter equals $3$. So formula (\ref{eq:sigma-3})
can be used to calculate the $\sigma$-images of \emph{all} points.
Further, the canonical isomorphism
$\Sigma(K,R)\to\widehat\Sigma(K,R)$ is given by (\ref{eq:iota-3}).
\end{exa}


\noindent Institut f\"ur Geometrie\\ Technische Universit\"at\\ Wiedner
Hauptstra{\ss}e 8--10\\ A-1040 Wien\\Austria

\end{document}